\newcommand{\ba}{\backslash}
\newcommand{\Z}{\mathbb{Z}}
\newtheorem{theorem}{Theorem}
\begin{document}

\begin{center}
{\bf New Strongly Regular Graphs Found via Local Search for Partial Difference Sets}  \\
Andrew C. Brady\footnote[1]{University of Richmond, VA, acbrady2020@gmail.com.} \\
\today \\ 
\end{center}

{\bf Abstract} {\it Strongly regular graphs (SRGs) are highly symmetric combinatorial objects, with connections to many areas of mathematics including finite fields, finite geometries, and number theory. One can construct an SRG via the Cayley Graph of a regular partial difference set (PDS). Local search is a common class of search algorithm that iteratively adjusts a state to (locally) minimize an error function. In this work, we use local search to find PDSs. We found PDSs with 62 different parameter values in 1254 nonisomorphic groups of orders at most 147. Many of these PDSs replicate known results. In two cases, (144,52,16,20) and (147,66,25,33), the PDSs found give the first known construction of SRGs with these parameters. In some other cases, the SRG was already known but a PDS in that group was unknown. This work also corroborates the existence of (64,18,2,6) PDSs in precisely 73 groups of order 64.}

{\bf Keywords} {\it Partial Difference Set, Strongly Regular Graph, Local Search}


\section{Introduction}

A graph is {\it strongly regular} if every vertex has degree $k$, every pair of adjacent vertices has $\lambda$ common neighbors, and every pair of non-adjacent vertices has $\mu$ common neighbors, for some $k,\lambda,\mu \in \Z$. Some strongly regular graphs (SRGs) are constructed from finite fields or finite geometries \cite{SRGbook}. 

In this paper, we require that the SRG has a regular automorphism group. Let $G$ be a group under multiplication and $1 \in G$ be the identity of the group. A subset $D \subset G$ is a {\it partial difference set} (PDS) if the set of differences $\{d_1 d_2^{-1} | d_1,d_2 \in D\}$ contains $\lambda$ copies of every nonidentity element in $D$ and $\mu$ copies of every nonidentity element not in $D$. PDSs are generalizations of difference sets and can be used to create projective 2-weight error correcting codes \cite{calderbankkantor}. Ma (1994) gives a nice survey of PDSs and how they relate to many other geometric and combinatorial objects \cite{MaSurvey}. Defining $D^{(-1)} := \{d^{-1} | d \in D\}$, a {\it regular} PDS is a subset $D \subset G$ such that D is a PDS, $1 \not\in D$, and $D = D^{(-1)}$. In this work, we require that our PDSs be regular, and henceforth refer to regular PDSs as simply PDSs. Partial difference sets have the additional interesting property that their Cayley Graph is strongly regular \cite{AbelTech}. Thus, by finding PDSs, we get an SRG for free. However, PDSs are difficult to find because the search space is so large: naively, to find all PDSs with parameters $(n,k,\lambda,\mu)$ in a group $G$ of order $n$ would require checking all ${n-1 \choose k}$ sets of size $k$. PDSs in nonabelian groups are especially difficult to find, because many mathematical techniques for finding PDSs function best in abelian groups. We attack the problem of finding PDSs in any group using local search. Because local search is such a flexible method, the techniques used in this paper could be adapted to search for other types of combinatorial objects. 

Local search is a class of search techniques that iteratively change a state in order to minimize an error function. Metaphorically, if the error function is a snowy mountain range, local search is choosing some starting location (a state), then sledding down a hill (iterative improvement) to a valley (local minimum). Advantages of local search include low space usage (because only the current state is stored) and fast search speed. However, because local search uses iterative improvement, local search can only find local minima of the error function, so there is no guarantee of solution optimality. For more information on local search, see chapter 4 of Russell and Norvig's classic textbook \cite{AIbook}.

The main result of this paper is the following theorem:

\begin{theorem} There exist (144,52,16,20) and (147,66,25,33) SRGs.
\end{theorem}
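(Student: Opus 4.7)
The plan is to reduce the existence of an SRG with parameters $(v,k,\lambda,\mu)$ to exhibiting a regular PDS with the same parameters inside some group of order $v$. Since the Cayley graph of a regular PDS is strongly regular (cited as \cite{AbelTech} in the introduction), it suffices to produce a subset $D$ of a group $G$ with $|G|=144$ and $|D|=52$ (respectively $|G|=147$ and $|D|=66$) satisfying $1 \notin D$, $D = D^{(-1)}$, and the required difference-multiset identity with $(\lambda,\mu)=(16,20)$ (resp.\ $(25,33)$). Once such $D$ is in hand, verification is completely mechanical: compute the difference multiset of $D$ and check the counts.

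To locate $D$, I would deploy the local-search framework described in the introduction. First, pick a candidate group $G$: for $|G|=147=3\cdot 7^2$ there are only a handful of isomorphism classes to try, while for $|G|=144$ the large number of groups forces one to make heuristic choices (e.g., prefer groups whose Sylow structure and character theory do not rule out a PDS with these parameters on general grounds). Next, encode the PDS conditions as an error function. For $D \subset G$ with $1 \notin D$ and $D=D^{(-1)}$, let $N_D(g) = |\{(d_1,d_2)\in D\times D : d_1 d_2^{-1}=g\}|$, and for each nonidentity $g$ set the target $\tau(g) = \lambda$ if $g \in D$ and $\tau(g) = \mu$ otherwise; then define
$$E(D) \;=\; \sum_{g \in G \setminus \{1\}} \bigl(N_D(g) - \tau(g)\bigr)^2.$$
By construction, $D$ is a PDS with the prescribed parameters if and only if $E(D)=0$.

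The search proper would initialize $D$ to a random inverse-closed subset of size $k$ excluding $1$, then iteratively perturb $D$ by swapping inversion-orbits between $D$ and $G \setminus (D \cup \{1\})$, always moving to reduce $E$. To avoid stagnation at local minima I would add random restarts, occasional uphill moves (simulated annealing) or a short tabu list, and exploit symmetry by quotienting moves by inversion so the constraint $D=D^{(-1)}$ is preserved for free. The search is run across many candidate groups of the relevant order; a single success in any one of them proves the theorem.

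The main obstacle is exactly what motivates this paper: the search space is astronomically large (of order $\binom{144}{52}$ in the first case), and $E$ typically has many deep local minima that are not global. The hard part is engineering the neighborhood structure, restart policy, and move-selection rule so that the search actually descends to $E=0$ in at least one hospitable group. Guiding the choice of $G$ using known feasibility or multiplier constraints on PDSs should help, but ultimately the argument is computational, and the proof is completed by exhibiting the sets $D$ found by the algorithm and verifying the PDS identity directly.
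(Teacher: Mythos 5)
Your proposal takes essentially the same route as the paper: existence is established by explicit construction of a regular PDS in a group of the appropriate order (found by local search minimizing a squared-deviation error on the difference multiset, which is exactly the paper's $e(D)=l_2(\mathrm{Diff}(D))$), followed by the observation that the Cayley graph of a regular PDS is strongly regular. The only thing separating your outline from a complete proof is the actual exhibited sets $D$, which the paper supplies in Table \ref{newSRG} (in SmallGrp(144,68), SmallGrp(144,126), SmallGrp(147,3), and SmallGrp(147,4)) and which can be verified mechanically as you describe.
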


\begin{proof}
We show existence by explicit construction. In \ref{newSRG} we give PDSs with the desired parameters. These PDSs are verifiable in GAP, C++, or by hand. As the Cayley Graph of a PDS is strongly regular, we have constructed the desired SRGs. 
\end{proof}

This paper is organized as follows. In Section \ref{RW}, we discuss other approaches to finding PDSs. In Section \ref{M}, we elaborate on the details of the local search. In Section \ref{R}, we discuss the new PDSs found. In Section \ref{CAFW}, we discuss limitations and future directions of this work. Tables are displayed at the end of the paper. 
 
\section{Related Work \label{RW}} 

To find new PDSs, one can rely on mathematical constructions or computational search. Originally, most constructions of SRGs from PDSs came as consequences of results from finite fields, finite geometries, projective codes, and other mathematical structures \cite{MaSurvey}. The underlying group in this case would often be abelian. More recently, Polhill, Davis, Smith, and Swartz (2023) constructed genuinely nonabelian PDS families, which are PDS families in nonabelian groups without a corresponding family in abelian groups \cite{GenNon}. They also recently (2023) constructed PDS families in nonabelian groups by adapting techniques used to construct PDSs in abelian groups \cite{AbelTech}. 

On the computational side, for example, AbuGhneim, Peifer, and Smith (2019) used computer search to construct all $(96,20,4,4)$ and $(96,19,2,4)$ PDSs \cite{all96}. Peifer (2019) constructed difference sets in many small groups \cite{DifSets}. Jørgenson and Klin (2003) found various PDSs in groups of order 100 with the aid of computational tools \cite{non100}. Brady (2022) exhaustively searched many groups of order 64 for Negative Latin Square Type PDSs with dimensions $(64,18,2,6)$ \cite{me}. Previous computational searches have typically implemented some theoretical mathematical restriction in order to reduce the search space to find results exhaustively. The results in this paper are unusual in that no restrictions are imposed on the target group: thus, this work is far more flexible than prior research. To the author's knowledge, local search has never been used in published research to look for PDSs.  

\section{Methods \label{M}}

A {\it state} in our local search will be a single set $D$ of $k$ elements, and a {\it move} will consist of replacing one element in $D$ (one swap). The local search will be a basic hill climb, that moves to its first seen neighbor with lower error. A single {\it trial} consists of choosing 1 initial starting state and iteratively updating the state until convergence. To choose a swap, a random element in $G \ba \{1\}$ is chosen to replace a random element in $D$ (for $(n-1)k$ total possible swaps); however, the swap is automatically invalid if the swap inserts an element already in $D$. We define {\it convergence} to be $\alpha$ swaps in a row being invalid or failing to improve $D$, for some fixed $\alpha$ we choose. Though most of our runs used $\alpha=(n-1)k-1$, we recommend setting $\alpha=(n-1)k$, so that all neighbors of a state are checked before the search gives up on a trial.

In the group ring $\Z[G]$, from our definition of PDS, it follows that a subset $D$ of a group $G$ is a PDS with parameters $(n:=|G|,k:=|D|,\lambda,\mu)$ if and only if we have that  \begin{equation*}
D := \sum_{d \in D} d, G := \sum_{g \in G} g, D^2 = k(1) + \lambda D + \mu (G \ba (\{ 1 \} \cup D)). \end{equation*} 

For an element $\sum_{g \in G} c_g g \in \Z[G]$, define $l_2(\sum_{g \in G} c_g g ) := \sum _{g \in G} |c_g|^2$ and define $\textrm{Diff}(D) := D^2 - k(1) - \lambda D - \mu (G \ba (\{ 1 \} \cup D))$. We define our error function to be \begin{equation*} e(D) := l_2(\textrm{Diff}(D)). \end{equation*} Note that a set $D$ is an PDS iff $e(D)=0$. Also note that computing e(D) without prior knowledge of $D^2$ takes $O(k^2+n)$ time, whereas updating e(D) after changing a single element of D takes $O(k)$ time. Thus choosing a swap as our state transition significantly speeds up our local search. 

The GAP package GRAPE was used to generate Cayley graphs and check graph isomorphism \cite{GRAPE}, and the GAP package Digraphs was used to check whether a graph came from a partial geometry \cite{Digraphs}. We used the GroupNames database as a reference to learn about the structure of various groups \cite{GroupNames}. The convolution tables (also known as multiplication or group tables) for the groups searched were generated with GAP \cite{GAP}. For small group generation and indexing we used the SmallGrp GAP package \cite{SmallGrp1.4.1}. These tables were then fed into C++, where the search was implemented. The searches were performed in C++ because C++ is a fast language. Randomness was generated using std::mt19937, a Mersenne Twister. 

Local search can be easily parallelized because it consists of many independent tasks. If many groups and parameter sets are being searched, as in \ref{newPDSsec}, one can run different groups on separate processors. If only one group is being searched, as in \ref{PDS512Sec}, then independent trials can be run on separate processors. The parallelized code was written with parlay, a C++ library for writing parallel code \cite{Parlay}. Parlay was chosen for its simplicity and effectiveness. 

Runs were performed on two machines: the author's laptop (Macbook Pro 2019, 32GB Memory, 2.6 GHz 6-Core Intel Core i7, Sonoma) and on Spydur, University of Richmond's HPC environment \cite{Spydur}. On Spydur, runs were performed on ``basic'' nodes, which each contain 52 Xeon cores and 384GB of memory. Runs were scheduled with SLURM; sequential runs were given 1 core and parallel runs were typically given 40 cores. 

\section{Results \label{R}}

\subsection{Recall of Local Search}

What is the recall of local search using $N$ trials? That is, how many groups with a known PDS will yield at least 1 PDS when doing $N$ trials of local search? Dr. Ken Smith, in currently unpublished work, found exhaustively, using a very different method, that precisely 73 groups of the 267 nonisomorphic groups of order 64 had (64,18,2,6) PDSs. As a proof of concept, we repeated this search using local search with 10,000 trials. We found a PDS in 73 groups, meaning that we got 100\% recall in this case. These results agree with Brady (2022) \cite{me}, which exhaustively found that, among groups with SmallGroup Id 55-266 of order 64, exactly 49 had (64,18,2,6) PDSs. Thus, even though local search is by nature not exhaustive, in small cases it can suggest nonexistence of some PDSs. 

\subsection{New PDSs Found \label{newPDSsec} }

All SRG dimensions with $n \le 144$ were searched for PDSs in every group, as well as SRG dimensions with $145 \le n \le 215$ or $218 \le n \le 238$, where the existence of an SRG was unknown according to Brouwer's table \cite{Brouwer}. See Table \ref{trialstab} for the maximum number of trials of local search run for each parameter set. For each parameter set and group, trials were run until a PDS was found or the maximum number of trials ran without finding a PDS. Thus, the vast majority of trials were spent searching for PDSs in groups where they likely do not exist. For example, only 130 trials were needed to generate a (147,66,25,33) PDS in SmallGroup(147,4), but 43218 trials were spent failing to find a (147,66,25,33) PDS in SmallGroup(147,5).

\begin{table}
\begin{center}
\begin{tabular}{c|c}
Group size & \# Trials \\
\hline
$n < 144$ & $5n^2$ \\
$n=144, k < 34$ & $n^2$ \\
$n=144, k \ge 34$ & $n^2,2n^2,2n^2$ \\
$145 \le n < 162$, SRG unknown & $2n^2$ \\
$162 \le n < 186$, SRG unknown & $2n^2,2n^2$ \\
$186 \le n < 239, n \not\in \{216,217\}$, SRG unknown & $2n^2$ \\
\hline
\end{tabular}
\end{center}
\caption{Maximum number of trials of local search ran on each parameter set. The number of trials was chosen to scale based on the group order, because larger groups have a larger search space. In the $n=144,k \ge 34$ case, for example, three searches were run: one with $n^2$ trials, and two with $2n^2$ trials.}
\label{trialstab}

\end{table}

In total, PDSs with 62 different parameter values were found in 1254 nonisomorphic groups. Many of these PDSs replicate known results, but some are new. We highlight two parameter sets for which we found a new SRG in Table \ref{newSRG}. To extract the SRGs from the PDSs given in Table \ref{newSRG}, adapt GAP code in Figure \ref{pdsCode}. We found two (144,52,16,20) PDSs, one in SmallGroup(144,68) and one in SmallGroup(144,126). The Cayley graphs of these PDSs give isomorphic (144,52,16,20) SRGs. From De Winter, Kamischke, and Wang we know that the (144,52,16,20) PDSs are genuinely nonabelian \cite{AutoSRG}. We found two (147,66,25,33) PDSs, one in SmallGroup(147,3) and one in SmallGroup(147,4). The Cayley graphs of these PDSs give isomorphic (147,66,25,33) SRGs. From Theorem 3.4 in Ma's PDS survey \cite{MaSurvey}, we know that the (147,66,25,33) PDSs are genuinely nonabelian. Note that the (147,66,25,33) SRG does not yield the partial geometry pg(6,10,3); although the clique number of this graph is 7, there are too few cliques of size 7 to give a partial geometry \cite{BrouwerAnalysis}. From Brouwer we know that the existence of (144,52,16,20) and (147,66,25,33) SRGs was open \cite{Brouwer}. 

\begin{table}[h!]

\begin{center}
\begin{tabular}{c|p{12cm}}
{\bf (144,52,16,20)} & SmallGroup(144,68): [2, 3, 5, 8, 10, 14, 17, 18, 22, 23, 32, 33, 43, 44, 45, 46, 47, 50, 54, 56, 57, 60, 61, 63, 68, 70, 71, 73, 75, 77, 79, 80, 81, 82, 85, 87, 91, 92, 94, 100, 105, 106, 108, 111, 113, 117, 126, 139, 140, 141, 142, 144] \\
& SmallGroup(144,126): [2, 3, 5, 8, 9, 15, 16, 19, 21, 22, 23, 25, 29, 30, 31, 35, 39, 40, 44, 45, 47, 48, 51, 52, 53, 54, 55, 56, 60, 67, 71, 74, 77, 86, 89, 90, 97, 98, 101, 103, 108, 111, 112, 118, 121, 127, 128, 136, 139, 140, 142, 143] \\
\hline
{\bf (147, 66, 25, 33)} & SmallGroup(147,3): [2, 3, 4, 5, 8, 10, 11, 12, 14, 16, 19, 20, 25, 27, 28, 29, 30, 31, 33, 37, 38, 39, 40, 41, 42, 45, 46, 52, 53, 55, 57, 58, 64, 66, 67, 70, 71, 79, 82, 87, 88, 89, 93, 94, 103, 104, 105, 107, 108, 111, 112, 113, 116, 117, 126, 127, 128, 129, 130, 131, 133, 134, 135, 142, 145, 147] \\
& SmallGroup(147,4): [3, 4, 6, 7, 8, 9, 11, 15, 16, 18, 20, 22, 23, 25, 26, 27, 32, 33, 35, 41, 42, 43, 45, 49, 50, 51, 52, 54, 56, 58, 62, 63, 64, 66, 72, 74, 77, 80, 81, 85, 86, 88, 89, 90, 91, 97, 98, 104, 111, 112, 113, 115, 116, 120, 121, 122, 123, 124, 125, 131, 132, 133, 136, 138, 141, 143] \\
\hline
\end{tabular}
\end{center}
\caption{New SRGs found and their associated PDSs. Please note that in total one (144,52,16,20) and one (147,66,25,33) SRG were found; the Cayley graphs of the PDSs give isomorphic SRGs. To extract the SRGs, see GAP code in Figure \ref{pdsCode}. }
\label{newSRG}
\end{table}

\begin{figure}
\begin{verbatim} 
g := SmallGroup(144,68); #Define g to be group of interest
e := Elements(g); 
pds := [2,3,5,8,10,...,142,144]; #Take from Table 1
LoadPackage("grape");
graph := CayleyGraph(g,List(pds,x->e[x]));
\end{verbatim} 
\caption{Code to extract an SRG from a PDS in Table \ref{newSRG}.}
\label{pdsCode}
\end{figure}

Please refer to the Github (\url{https://github.com/42ABC/PDS_local/}) for the complete list of PDSs, which includes PDSs with parameters (81,32,13,12), (81,40,19,20), (125,28,3,7), (125,52,15,26), (125,62,30,31), (144,55,22,20), and (144,66,30,30). Please note that PDSs in the source code are zero-indexed (C++ style), whereas PDSs included in the paper are 1-indexed (GAP-style). Thus, to load any PDS in the repository into GAP, 1 must be added to every element. 

\subsection{PDS found in EA(512) \label{PDS512Sec} }

Running $111100$ trials, three (512,70,6,10) PDSs were found in the elementary abelian group of order 512. Although these PDSs were already known, the fact that local search could even find a PDS of size 512 demonstrates that local search can function on larger group sizes. However, the fact that so much many trials were needed to find a PDS in EA(512), arguably the easiest group of order 512 to search, suggests that better versions of local search are needed to handle this scale. 

\section{Conclusions, Future Work, and Acknowledgements \label{CAFW}}

In this project, we used local search to find PDSs in many small groups, some of which are new. Two of the new PDSs are used to construct SRGs listed as open in Brouwer's tables \cite{Brouwer}. Additionally, these PDSs will serve as useful large examples to researchers, to help them understand the structure of PDSs. 

This work has two main limitations. First, because local search is not exhaustive, we cannot use this method to determine nonexistence of a PDS in a particular group. Consequently, determining when to stop a run can be difficult, because running more trials will not yield a PDS if a PDS does not exist in that group. Second, the PDSs found are necessarily a mix of new and already known PDSs. It is difficult to know which are new and which are not because there is not an equivalent of Brouwer's table \cite{Brouwer} for PDSs. 

The SRGs provided in this paper could help construct other novel SRGs. For instance, Brouwer recently used graph switching on the (147,66,25,33) SRG to construct a (148,77,36,44) SRG, the existence of which was previously unknown \cite{BrouwerAnalysis}. 

This paper managed to find interesting results simply by computer search, without incorporating mathematical knowledge into the search. Thus, if mathematical knowledge was incorporated, the search could become even more effective. One way to do this would be to do local search on a search space already reduced by character theory. Another way would be to do local search with a starting set mathematically close to a PDS. Another idea is to fix a set of linking difference sets and do local search on the coset representatives attached to these difference sets, in an effort to find a large PDS, since certain PDSs can be broken down into difference sets. A simple modification would be to force inverses to be swapped in/out together (because $D=D^{(-1)}$). 

It is worth highlighting that local search is a highly flexible method. Thus, the ideas in this paper can be easily adapted to find difference sets, relative difference sets, partial difference sets in general (instead of PDSs), or any other combinatorial object where an error is easily defined. 

\noindent \textbf{Acknowledgements.} Thank you to Drs. James Davis, Eric Swartz, Ken Smith, and John Polhill for contributions to the work and helping to edit the paper draft. Thank you to Drs. Ken Smith and Ryan Kaliszewski for the incidence matrix and convolution table code used in this work. Thank you to the University of Richmond for the computing resources (Spydur) on which runs were performed and to George Flanagin for information about Spydur and SLURM. Thank you to Andries Brouwer, John Bamberg, and Ferdinand Ihringer for helpful comments following the first submission of the preprint. This paper started as a final project for Dr. Joonsuk Park's AI course; I would like to thank Dr. Park for making the assignment so open-ended that I could have the time to pursue this interesting line of research. 

\noindent \textbf{Declaration of Interest.} The author reports there are no competing interests to declare. 

\bibliographystyle{plain} 
\bibliography{my_bib}{}


\end{document}